\newtheorem{lem}{Lemma}
\newtheorem{rem}{Remark}
\newtheorem{thm}{Theorem}
\newcommand{\E}{\mathbb{E}}
\title{On the real zeros of random trigonometric polynomials \\with dependent coefficients}
\author{J\"urgen Angst, Federico Dalmao, Guillaume Poly}
\begin{document}

\maketitle

\begin{abstract}We consider random trigonometric polynomials of the form 
\[
f_n(t):=\sum_{1\le k \le n} a_{k} \cos(kt)  + b_{k} \sin(kt),
\]
whose entries $(a_{k})_{k\ge 1}$ and $(b_{k})_{k\ge 1}$ are given by two independent stationary Gaussian processes with the same correlation function $\rho$. Under mild assumptions on the spectral function $\psi_\rho$ associated with $\rho$, we prove that the expectation of the number $N_n([0,2\pi])$ of real roots of $f_n$ in the interval $[0,2\pi]$ satisfies
\[
\lim_{n \to +\infty} \frac{\mathbb E\left [N_n([0,2\pi])\right]}{n} = \frac{2}{\sqrt{3}}.
\]
The latter result not only covers the well-known situation of independent coefficients but allow us to deal with long range correlations. In particular it englobes the case where the random coefficients are given by a fractional Brownian noise with any Hurst parameter.
\end{abstract}


\section{Introduction}
The study of roots and level lines of random functions is a central topic in mathematics, at the crossroad between Algebra, Analysis and Probability theory, which has been extensively studied since the mid 20th century. Being at the very definition of algebraic varieties, the investigation of the geometry of nodal sets associated to random algebraic or analytic functions is naturally of primer importance. In the pioneering work \cite{little1}, the authors considered the expected number of real zeros of random univariate algebraic polynomials with uniform, Gaussian, and discrete entries. Since then, lots of developments were made to estimate the asymptotic behavior, as their degree goes to infinity, of the real/complex level sets associated to such polynomials, under various assumptions on the law of the random entries, see for example \cite{kac1, kac2, erdos, ibragimov1,farah1, kostlan} and the references therein.
\newpage
Among the class of random functions, of particular interest are random trigonometric polynomials of the form $\sum_{k=1}^n a_k \cos (k t)$ or $\sum_{k=1}^n a_k \cos (k t)+b_k\sin (k t)$, where $(a_k)_{k \geq 1}$ and $(b_k)_{k \ge 1}$ are random coefficients, since the distribution of the zeros of such polynomials occurs in a wide range of problems in science and engineering. The asymptotics of the mean number of real zeros of random trigonometric polynomials with independent standard and centered Gaussian coefficients was first explicited by Dunnage in \cite{dunnage}, where it is shown that this number is asymptotically proportional to the degree $n$ of the considered polynomial. Since then, the level sets of random trigonometric polynomials have been intensively investigated in various directions. For example, still in the case of independent standard and centered Gaussian entries, the variance and the fluctuations around the mean were studied in the serie of papers \cite{faravar,wigman,azais,azais2016}. Beyond the purely Gaussian case but still considering independent standard and centered entries, the universality of the asymptotic local/global behavior of the number of real zeros was recently established in \cite{angst2015universality,AZ15,flasche2017,iksanov2016local}.
\par
\bigskip
To the best of our knowledge, the case of dependent Gaussian entries in trigonometric models has only been considered in \cite{samba,renganathan1984average} which focus on the two particular and somehow ``extreme'' cases of a constant correlation $\E(a_i a_j)=\rho \in ]0,1[$ and a geometric correlation $\E(a_i a_j)= \rho^{|i-j|}$. In both cases, it is nonetheless shown that the expected number of real roots lying in $[0,2\pi]$ obeys the same asymptotics
\begin{equation}\label{Meanbehavior}
\lim_{n \to +\infty} \frac{\mathbb E\left [N_n([0,2\pi])\right]}{n} = \frac{2}{\sqrt{3}}.
\end{equation}
The latter result naturally raises the question of ascertaining the sharp conditions on the correlation function $\rho$ of the random coefficients ensuring this universal asymptotic behavior. The main result of this article, i.e. Theorem 1 p. 9 below, provides a significant step in that direction by exhibiting mild conditions on the spectral density $\psi_\rho$ guaranteeing that (\ref{Meanbehavior}) indeed holds. Our proofs exploit the Kac--Rice formula and more specifically the interpretation of its underlying integrand in terms of convolutions with respect to suitable trigonometric kernels. In a spirit close to \cite{ibragimov1971expected}, that is to say by investigating sign changes of piecewise linear approximations of the underlying random functions, the analoguous question for Kac polynomials ($P_n(x)= \sum_{k\le n} a_k x^k$) has been tackled in \cite{glendinning1989growth}. Under some assumptions on the spectral density, yet not covering the case of the increments of a fractional Brownian motion, it is shown that
\[
\lim_{n \to +\infty} \frac{\mathbb E\left [N_n(\mathbb{R})\right]}{\log(n)} = \frac{2}{\pi}.
\] 
\par
\bigskip
The plan of the article is the following. In the next Section 2, we introduce the considered model of random trigonometric polynomials with dependent coefficients and we explicit the hypotheses made on their correlation function. In Section 3, we introduce a suitable renormalization allowing us to express the covariance of the process and its derivative as convolutions with positive kernels which approximate unity. In the last Section 4, we use the celebrated Kac--Rice formula to express the expected number of real zeros and deduce its asymptotics.

\section{The model and the hypotheses}
We consider here random trigonometric polynomials of the form
\[
f_n(t):=\sum_{1\le k \le n} a_{k} \cos(kt)  + b_{k} \sin(kt), \quad t \in \mathbb R,
\]
where $(a_{k})_{k\ge 1}$ and $(b_{k})_{k\ge 1}$ are two independent sequences of standard centered Gaussian variables with correlation function $\rho : \mathbb N \to \mathbb R$, namely
$\mathbb E[a_k a_{\ell} ]= \mathbb E[b_k b_{\ell}] =:\rho(|k-\ell|)$ and $\mathbb E[a_k b_{\ell} ] =0$, $ \forall k, \ell \in \mathbb N^*.$
We will suppose that the spectral function $\psi_{\rho}$ 
\[
\psi_{\rho}(x) := \sum_{k \in \mathbb Z} \rho(|k|) e^{i k x}, \quad x \in ]0, 2\pi[,
\]
{{} exists} and satisfies the following hypotheses: \par
\vspace{-0.5cm}
\begin{equation}\label{eq.hypo}
\psi_{\rho} \in \mathbb L^1([0, 2\pi],dx), \quad \psi_{\rho} \; \text{is continuous on} \; ]0, 2\pi[ \;\; \text{and} \;\; \gamma_{\rho}:=\inf_{t \in [0, 2\pi]} \psi_{\rho}(t) >0.
\end{equation}\par
\vspace{-0.2cm}
\noindent
By Riemann--Lebesgue Lemma, the above integrability condition ensures that the correlation coefficient $\rho(k)$ goes to zero as $k$ goes to infinity, but no condition is required on the speed of the decay, allowing us in particular to consider long-range correlations.
For instance, the assumptions \eqref{eq.hypo} are satisfied by the fractional Gaussian noise with Hurst index $1/2<H<1$. Indeed, in this case, the correlation function $\rho_H$ is defined as \par
\vspace{-0.1cm}
\[
\rho_H({{}k}) := \frac{1}{2} \left( |1+{{}k}|^{2H} +|1-{{}k}|^{2H} -2|{{}k}|^{2H}   \right).
\]\par
\vspace{-0.1cm}
\noindent
Setting $c_H:=\sin(\pi H) \Gamma(2H+1)$ and as shown for example in Proposition 2.1 and Corollary 2.1 of \cite{beran1994}, , the associated spectral function is then \par
\vspace{-0.1cm}
\[
{{}\psi}_{\rho_H}(x) := 2 c_H ( 1-\cos(x) )  \sum_{j \in \mathbb Z}^{+\infty}  \frac{1}{(2\pi j +x)^{2H+1}}.
\]\par
\vspace{-0.1cm}
\noindent
This function admits a pole at the origin in accordance with the long-range correlation feature, with ${{} \psi}_{\rho_H}(x) \sim c_H |x|^{1-2H}$, and it admits a global positive minimum at $\pi$, as illustrated in Figure 1 below, for different values of the Hurst parameter $H$.\par
\vspace{-0.1cm}
\begin{figure}[ht]
\begin{center}
\includegraphics[scale=0.45]{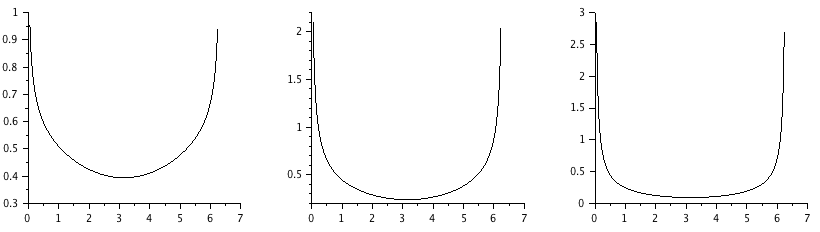}
\end{center}\par
\vspace{-0.5cm}
\caption{The graph of the function ${{}\psi}_{\rho_H}$ on $]0, 2\pi[$ for $H=0.6$, $H=0.75$, $H=0.9$.}
\end{figure}

\newpage
\section{Normalization and convergence}
Let us now introduce the following normalized version $F_n(t)$ of the process $f_n(t)$
\[
F_n(t):=\frac{1}{\sqrt{n}}\,  f_n(t) = \frac{1}{\sqrt{n}}  \left( \sum_{1\le k \le n} a_{k} \cos(kt)  + b_{k} \sin(kt) \right),
\]
and its derivative 
\[
F_n'(t):= \frac{1}{\sqrt{n}}  \left( \sum_{1\le k \le n} -k a_{k} \sin(kt)  + k b_{k} \cos(kt) \right).
\]
Naturally, the zeros of $f_n$ and its normalized version $F_n$ coincide. 
Moreover, the variance of $F_n(t)$ and $F_n'(t)$ have particularly nice expressions in terms of convolutions of the spectral function with non-negative kernels.

\begin{lem}\label{lem.var}
There exist two regular $2\pi-$periodic functions $K_n$ and $L_n$ such that for all $t \in [0,2\pi]$
\begin{equation}
\mathbb E [F_n(t)^2]  =  K_n \ast \psi_{\rho}(t), \qquad \mathbb E [F_n'(t)^2] =  \frac{(n+1)(2n+1)}{6} L_n \ast \psi_{\rho}(t).
\end{equation}
The two functions $K_n$ and $L_n$ are non-negative kernels with  $||K_n||_1 = ||L_n||_1 =1$ and 
\[
\forall \varepsilon>0, \;\; \lim_{n \to +\infty} \int_{\varepsilon}^{2\pi-\varepsilon} K_n(x) dx = \lim_{n \to +\infty} \int_{\varepsilon}^{2\pi-\varepsilon} L_n(x) dx =0.
\]
In particular, under the hypotheses of Section 2, both functions $K_n \ast \psi_{\rho}$ and $L_n \ast \psi_{\rho}$ converge uniformly to $\psi_{\rho}$ on any compact subset of $]0,2\pi[$.
\end{lem}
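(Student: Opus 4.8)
The plan is to compute the two variances explicitly as trigonometric sums and then recognize them as convolutions against Fejér-type kernels. First I would expand $\mathbb{E}[F_n(t)^2]$ using independence of the $a_k$ and $b_k$ and the stationarity relation $\mathbb{E}[a_ka_\ell]=\mathbb{E}[b_kb_\ell]=\rho(|k-\ell|)$. Writing $\cos(kt)\cos(\ell t)+\sin(kt)\sin(\ell t)=\cos((k-\ell)t)$, one gets
\[
\mathbb{E}[F_n(t)^2]=\frac{1}{n}\sum_{1\le k,\ell\le n}\rho(|k-\ell|)\cos\big((k-\ell)t\big)
=\frac{1}{n}\sum_{|j|<n}(n-|j|)\,\rho(|j|)\cos(jt).
\]
Since $\psi_\rho(x)=\sum_{m\in\mathbb Z}\rho(|m|)e^{imx}$ and the (real, even, $2\pi$-periodic) function $K_n$ I wish to produce must satisfy $\widehat{K_n}(j)=\tfrac{n-|j|}{n}\mathbf 1_{|j|<n}$, the natural candidate is the normalized Fejér kernel $K_n(x):=\tfrac{1}{2\pi n}\big(\tfrac{\sin(nx/2)}{\sin(x/2)}\big)^2$, for which indeed $K_n\ast\psi_\rho(t)=\sum_{|j|<n}\tfrac{n-|j|}{n}\rho(|j|)e^{ijt}$, matching the sum above. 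The analogous computation for the derivative gives
\[
\mathbb{E}[F_n'(t)^2]=\frac{1}{n}\sum_{1\le k,\ell\le n}k\ell\,\rho(|k-\ell|)\cos\big((k-\ell)t\big),
\]
and I would define $L_n$ by $\widehat{L_n}(j)=\big(\tfrac{6}{n(n+1)(2n+1)}\big)\sum_{\max(1,1-j)\le k\le\min(n,n-j)}k(k+j)$ for $|j|<n$, so that the prefactor $\tfrac{(n+1)(2n+1)}{6}$ comes out (the value at $j=0$ is $\tfrac{6}{n(n+1)(2n+1)}\sum_{k=1}^n k^2=\tfrac1n\cdot\tfrac{(n+1)(2n+1)}{6}\cdot\tfrac6{n(n+1)(2n+1)}\cdot n$—wait, normalized so $\widehat{L_n}(0)=1$). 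Concretely $L_n$ is the square of a weighted Dirichlet-type sum divided by its $L^1$ norm; an explicit closed form is not needed.

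The three properties to verify are: (i) $K_n,L_n\ge 0$; (ii) $\|K_n\|_1=\|L_n\|_1=1$; (iii) the mass concentrates at $0$, i.e. $\int_\varepsilon^{2\pi-\varepsilon}K_n\to 0$ and likewise for $L_n$. For $K_n$ all three are the classical Fejér kernel facts: positivity is manifest from the squared-sine formula, $\|K_n\|_1=\widehat{K_n}(0)=1$, and $K_n(x)\le\tfrac{1}{2\pi n\sin^2(\varepsilon/2)}$ for $x\in[\varepsilon,2\pi-\varepsilon]$ gives (iii). For $L_n$, positivity follows by writing it, after summing the geometric-type series with polynomial weights, as $|g_n(x)|^2$ for an explicit trigonometric polynomial $g_n$ (the derivative-weighted analogue of the Dirichlet kernel, of the form $\sum_{k=1}^n k\,e^{ikx}$ up to normalization); normalization (ii) is again $\widehat{L_n}(0)=1$ by construction; and for (iii) I would bound $|g_n(x)|$ uniformly on $[\varepsilon,2\pi-\varepsilon]$ by $C_\varepsilon$ (since $\sum_{k=1}^n k e^{ikx}$ is $O(n/\sin^2(x/2))$ by an Abel summation / differentiating the Dirichlet kernel argument), so that $L_n=O_\varepsilon(1)\cdot$(something $o(1)$ after normalizing by the order-$n^3$ total mass)—more precisely $\sup_{[\varepsilon,2\pi-\varepsilon]}L_n=O(n^2/n^3)=O(1/n)\to0$, which integrates to $0$.

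Finally, the uniform convergence of $K_n\ast\psi_\rho$ and $L_n\ast\psi_\rho$ to $\psi_\rho$ on a compact $[\delta,2\pi-\delta]$ is the standard approximate-identity argument, with the one subtlety that $\psi_\rho$ is only continuous on the \emph{open} interval $]0,2\pi[$ and merely $L^1$ near the endpoints. So I would split the convolution integral $K_n\ast\psi_\rho(t)-\psi_\rho(t)=\int(\psi_\rho(t-x)-\psi_\rho(t))K_n(x)\,dx$ into $|x|\le\eta$ and $\eta\le|x|\le\pi$: on the first piece use uniform continuity of $\psi_\rho$ on a slightly enlarged compact neighbourhood of $[\delta,2\pi-\delta]$ together with $\|K_n\|_1=1$; on the second piece use property (iii) in the form $\sup_{\eta\le|x|\le\pi}K_n(x)\to0$ against $\|\psi_\rho\|_1$, plus the term $|\psi_\rho(t)|\int_{\eta\le|x|}K_n$ which also vanishes. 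The main obstacle is really just this endpoint integrability issue — making sure that when $t$ ranges over the compact set and $x$ over $[-\eta,\eta]$ the argument $t-x$ stays in a region where $\psi_\rho$ is bounded and uniformly continuous, and controlling the far part purely through the $L^1$ norm of $\psi_\rho$ and the sup-decay of the kernels; the positivity and normalization of $L_n$ is a close second, requiring the explicit square representation rather than an abstract argument.
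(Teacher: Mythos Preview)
Your proposal is correct and follows essentially the same route as the paper: both compute the variances as weighted cosine sums in the lag variable, identify $K_n$ as the Fej\'er kernel, realise $L_n$ (after the $(n+1)(2n+1)/6$ normalisation) as a constant times $\bigl|\sum_{k=1}^n k\,e^{ikx}\bigr|^2$, and then use the closed form of that sum to get a pointwise bound of order $n^2$ away from the origin, which after dividing by the order-$n^3$ normalising constant gives $\sup_{[\varepsilon,2\pi-\varepsilon]}L_n=O(1/n)$. Your treatment of the approximate-identity step, splitting into a near part controlled by uniform continuity on a slightly enlarged compact and a far part controlled by the $L^1$ norm of $\psi_\rho$ together with the sup-decay of the kernel, is exactly what is needed given that $\psi_\rho$ is only $L^1$ near the endpoints; the paper states this conclusion without spelling out the split. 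The one slip in your write-up is the phrase ``bound $|g_n|$ by $C_\varepsilon$'', which taken literally is false for the unnormalised sum, but you immediately give the correct $O(n)$ bound and the correct $O(n^2/n^3)$ conclusion, so the argument stands.
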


\begin{proof}
For fixed $t$ and $n$, we have
\[
\begin{array}{ll}
\mathbb E[F_n(t)^2] & \displaystyle{=\frac{1}{n} \sum_{k,l=1}^n \rho(k-l) \cos((k-l)t) = 1 + \frac{2}{n} \sum_{l>k}  \rho(k-l) \cos((k-l)t) }\\
 & \displaystyle{=  1 + \frac{2}{n} \sum_{k=1}^{n-1} \sum_{l=k+1}^n  \rho(k-l) \cos((k-l)t)=  1 +  \frac{2}{n}\sum_{k=1}^{n-1} \sum_{r=1}^{n-k}  \rho(r) \cos(r t) } \\
 & \displaystyle{=  1 +  \frac{2}{n}\sum_{r=1}^{n} (n-r) \rho(r) \cos(r t) =  \sum_{r=-n}^{n} \left(1-\frac{|r|}{n}\right) \rho(r) e^{irt} = K_n \ast \psi_{\rho}(t),} 
\end{array}
\]
where $K_n$ is the celebrated Fej\'er kernel, namely 
\[
K_n(x) := \sum_{r=-n}^{n} \left(1-\frac{|r|}{n}\right) e^{irx} = \frac{1}{n} \left(\frac{\sin(n x/2)}{\sin(x/2)}\right)^2.
\]
It is well known that $K_n$ has unit $\mathbb L^1-$norm and is an approximation of unity. By hypothesis, the spectral function $\psi_{\rho}$ is continuous on $]0, 2\pi[$, in particular it is uniformly continuous on any compact subset $K$ of $]0, 2\pi[$, so that the convolution $K_n \ast \psi_{\rho}$ uniformly converges to $\psi_{\rho}$ on $K$ as $n$ goes to infinity. 
In the same way, we have 
\[\label{e:var}
\begin{array}{ll}
\mathbb E[F_n'(t)^2] & \displaystyle{=\frac{1}{n} \sum_{k,l=1}^n k l \rho(k-l) \cos((k-l)t) = \frac{1}{n} \sum_{k=1}^n k^2 + \frac{2}{n} \sum_{l>k}  k l \rho(k-l) \cos((k-l)t) }\\
 & \displaystyle{=   \frac{1}{n} \sum_{k=1}^n k^2 + \frac{2}{n}\sum_{k=1}^{n-1} \sum_{r=1}^{n-k}  k(r+k)\rho(r) \cos(r t) } \\
  & \displaystyle{=   \frac{1}{n} \sum_{k=1}^n k^2 + \frac{2}{n}\sum_{r=1}^{n-1} \rho(r) \cos(r t) \left( \sum_{k=1}^{n-r} k(r+k) \right) } \\
    & \displaystyle{=   \sum_{r=-(n-1)}^{n-1} \rho(r) e^{i rt}  \left(  \frac{1}{n} \sum_{k=1}^{n-|r|} k(|r|+k) \right).} 
\end{array}
\]
With the convention that $\sum_{\emptyset} = 0$, and setting  $\alpha_n:=6/((n+1)(2n+1))$ for $n \geq 1$, we have thus 
\[
\mathbb E[F_n'(t)^2] =  \frac{1}{\alpha_n}  L_n \ast \psi_{\rho}(t),
\]
where
\[
L_n(x):= \alpha_n \sum_{r=-n}^{n}    \left(  \frac{1}{{n}} \sum_{k=1}^{n-|r|} k(|r|+k) \right) e^{i r x}.
 \]
Alternatively, the function $L_n(x)$ can be written as
\[
\begin{array}{ll}
L_n(x)& =\displaystyle{\alpha_n \sum^{n}_{k,l=1}kl\cos((k-l)x)
=\alpha_n \left( \left[\sum^{n}_{k=0}k\cos(kx)\right]^2 +\left[\sum^{n}_{k=0} k\sin(kx)\right]^2 \right)} \\
\\
& = \displaystyle{\alpha_n \left| \sum^{n}_{k=0} k e^{i kx}\right|^2 = \alpha_n \left| \frac{(n+1)e^{i (n+1)x}}{1-e^{ix}} - \frac{i e^{ix} \left(1-e^{i(n+1)x}\right)}{\left( 1-e^{ix} \right)^2}\right|^2}.
\end{array}
\]
It is therefore non-negative and satisfies the folllowing inequality
\[
 L_n(x)  \leq \alpha_n \left( (n+1) |1-e^{ix}|^{-1} + 2|1-e^{ix}|^{-2}\right), \quad \forall x \in ]0, 2\pi[.
\]
For all small $\varepsilon>0$, we have thus 
\[
\int_{\varepsilon}^{2\pi-\varepsilon} L_n(x) dx \leq\alpha_n \left(  (n+1) \sin(\varepsilon/2)^{-1} + 2\sin(\varepsilon/2)^{-2}\right) = O(1/n).
\]
Finally, we have
\[
||L_n||_1=\frac{1}{2\pi} \int_{-\pi}^{\pi} L_n(x)dx = \alpha_n \times \frac{1}{n} \sum_{k=1}^n k^2 =1.
\]
The next figure illustrates the behavior of the kernel $L_n$ for different values of $n$.
\begin{figure}[ht]
\begin{center}
\includegraphics[scale=0.4]{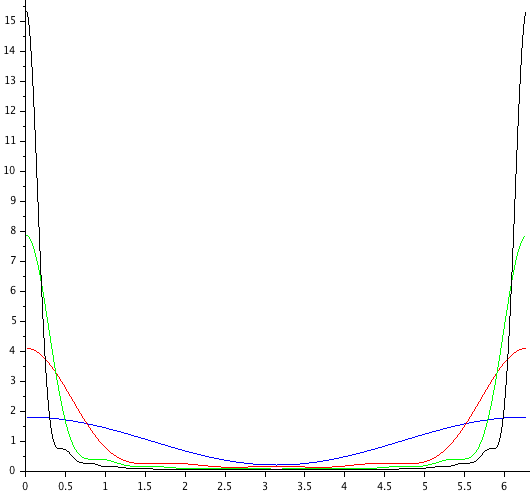}
\end{center}
\caption{The graph of the function $L_n$ on $]0, 2\pi[$ for $n=2,5,10,20$.}
\end{figure}
\end{proof} 

\begin{rem}\label{r:n-d}
The positivity of the Fej\'er kernel $K_n$ combined with the lower bound hypothesis on the spectral function ensures that the variance of $F_n(t)$ is bounded below, namely
\begin{equation}\label{eq.lbound}
\mathbb E[F_n(t)^2]  = K_n \ast \psi_{\rho}(t) \geq \gamma_{\rho} >0, \quad \forall t\in [0, 2\pi].
\end{equation}
\end{rem}
\par
\bigskip
Let us now describe the behavior as $n$ goes to infinity of the covariance between the process $F_n(t)$ and its derivative $F_n'(t)$. 
\begin{lem}\label{lem.covar}
The covariance between $F_n(t)$ and $F_n'(t)$ is given by 
\[
\mathbb E [F_n(t) F_n'(t)]  = \frac{1}{2} \, K_n' \ast \psi_{\rho}(t), 
\]
and under the hypotheses of Section 2, for any compact subset $K$ of $]0, 2\pi[$, it satisfies 
\[
\lim_{n \to +\infty} \frac{\sup_{t \in K} \left| \mathbb E [F_n(t) F_n'(t)] \right| }{n} = 0.
\]
\end{lem}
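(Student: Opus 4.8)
The plan is to obtain the closed form for the covariance by differentiating the variance formula of Lemma~\ref{lem.var}, and then to estimate the resulting convolution on a compact $K\subset\,]0,2\pi[$ by carefully isolating the contribution of the tiny window where $K_n'$ is large. For the first part, note that $F_n$ is a finite trigonometric sum, so $t\mapsto F_n(t)$ is smooth and, $F_n(t)$ being Gaussian with locally bounded moments, one may differentiate under the expectation: $\frac{d}{dt}\mathbb{E}[F_n(t)^2]=2\,\mathbb{E}[F_n(t)F_n'(t)]$. By Lemma~\ref{lem.var}, $\mathbb{E}[F_n(t)^2]=K_n\ast\psi_\rho(t)=\frac{1}{2\pi}\int_{-\pi}^{\pi}K_n(t-s)\psi_\rho(s)\,ds$; since $K_n$ is a trigonometric polynomial (hence $C^\infty$ with bounded derivative) and $\psi_\rho\in\mathbb{L}^1$, differentiation under the integral is licit and gives $(K_n\ast\psi_\rho)'(t)=K_n'\ast\psi_\rho(t)$. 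Identifying the two expressions yields $\mathbb{E}[F_n(t)F_n'(t)]=\tfrac12\,K_n'\ast\psi_\rho(t)$.

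For the asymptotic estimate, fix a compact $K\subset\,]0,2\pi[$ and $\eta>0$ small enough that $K_\eta:=\{s:\mathrm{dist}(s,K)\le\eta\}$ is still a compact subset of $]0,2\pi[$; let $\omega$ be a modulus of uniform continuity of $\psi_\rho$ on $K_\eta$. Writing $K_n'\ast\psi_\rho(t)=\frac{1}{2\pi}\int_{-\pi}^{\pi}K_n'(u)\psi_\rho(t-u)\,du$ and using that $K_n$ is even, so that $\int_{|u|<\eta}K_n'(u)\,du=K_n(\eta)-K_n(-\eta)=0$, one may subtract the constant $\psi_\rho(t)$ on the central part and split
\[
K_n'\ast\psi_\rho(t)=\frac{1}{2\pi}\int_{|u|<\eta}\!\!K_n'(u)\big(\psi_\rho(t-u)-\psi_\rho(t)\big)du+\frac{1}{2\pi}\int_{\eta\le|u|\le\pi}\!\!K_n'(u)\psi_\rho(t-u)\,du.
\]

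From $K_n(x)=\tfrac1n\big(\sin(nx/2)/\sin(x/2)\big)^2$ one gets $|K_n'(x)|\le\sin(x/2)^{-2}+n^{-1}|\sin(x/2)|^{-3}$, hence $\sup_{\eta\le|x|\le\pi}|K_n'(x)|\le 2\sin(\eta/2)^{-2}$ for $n$ large; therefore the second integral is bounded uniformly in $t\in K$ by $2\sin(\eta/2)^{-2}\|\psi_\rho\|_1$ (finite since $\psi_\rho\in\mathbb{L}^1$), which is $o(n)$. For the first integral, split $\{|u|<\eta\}$ into $\{|u|\le\pi/n\}$ and $\{\pi/n<|u|<\eta\}$: on the former, Bernstein's inequality together with $\|K_n\|_\infty=K_n(0)=n$ gives $\|K_n'\|_\infty\le n^2$, so its contribution is at most $n^2\cdot(2\pi/n)\cdot\omega(\pi/n)=O(n\,\omega(\pi/n))$; on the latter, using $\sin(x/2)\ge x/\pi$ the previous bound yields $|K_n'(u)|\le Cu^{-2}$, whence, $\omega$ being nondecreasing,
\[
\int_{\pi/n<|u|<\eta}|K_n'(u)|\,\omega(|u|)\,du\le C\,\omega(\eta)\int_{\pi/n}^{\eta}\frac{du}{u^2}\le C'\,n\,\omega(\eta).
\]
Collecting these bounds and dividing by $n$, for each fixed $\eta$ one gets $\limsup_{n\to\infty}\sup_{t\in K}\frac1n\big|\mathbb{E}[F_n(t)F_n'(t)]\big|\le C''\,\omega(\eta)$; letting $\eta\to0$ and using that $\psi_\rho$ is uniformly continuous on a fixed compact neighbourhood of $K$ forces the right-hand side to $0$.

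I expect the delicate point to be the estimate near the peak of the kernel, i.e. the $\{\pi/n<|u|<\eta\}$ piece: since $\|K_n'\|_1$ is of order $n$, the naive bound $|K_n'\ast\psi_\rho|\le\|K_n'\|_1\|\psi_\rho\|_\infty$ only gives $O(n)$ and fails to be $o(n)$ by exactly the margin one needs, so the cancellation $\int_{|u|<\eta}K_n'=0$ combined with the (possibly arbitrarily slow) uniform continuity of $\psi_\rho$ is genuinely required to win the extra factor.
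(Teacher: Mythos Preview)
Your proof is correct and follows the same overall strategy as the paper: obtain the convolution formula by differentiating $\mathbb{E}[F_n(t)^2]=K_n\ast\psi_\rho(t)$, then split the integral into a small window around $0$ and its complement, exploiting uniform continuity of $\psi_\rho$ near $t$ on the former and the uniform boundedness of $K_n'$ away from $0$ on the latter.

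The one noteworthy difference lies in the treatment of the inner window. The paper subtracts $\psi_\rho(t)$ over the \emph{entire} period (using $\int_0^{2\pi}K_n'(x)\,dx=0$) and then bounds the inner piece in a single stroke via the $L^1$-Bernstein inequality $\|K_n'\|_1\le n\|K_n\|_1=n$, so that $\frac{1}{n}\int_{|x|<\eta}|K_n'(x)|\,|\psi_\rho(t-x)-\psi_\rho(t)|\,dx\le \varepsilon$ immediately. You instead subtract $\psi_\rho(t)$ only on $\{|u|<\eta\}$ (using the evenness of $K_n$) and replace the $L^1$-Bernstein bound by a further split at $|u|=\pi/n$, controlling the innermost piece with $\|K_n'\|_\infty\le n^2$ and the annulus with the pointwise estimate $|K_n'(u)|\le C u^{-2}$. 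This is perfectly valid but more laborious; effectively you are reproving by hand that $\|K_n'\|_{L^1(|u|<\eta)}=O(n)$. One minor presentational point: fix once and for all a compact neighbourhood $K_{\eta_0}\subset\,]0,2\pi[$ of $K$ and let $\omega$ be the modulus of continuity of $\psi_\rho$ on $K_{\eta_0}$, so that $\omega$ does not change when you send $\eta\to 0$ at the end; you acknowledge this in your last sentence, but setting it up that way from the start would be cleaner.
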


\begin{proof}The expression of the covariance as a convolution is simply obtained by differentiating the one of $\mathbb E[F_n(t)^2]$. The derivative of the Fej\'er kernel is given by the explicit formula
\[
K_n'(x) = \frac{\sin(nx/2)\cos(nx/2)}{\sin^2(x/2)}-\frac{\cos(x/2)\sin^2(nx/2)}{n \sin^3(x/2)},
\]
from which it is clear that for any small $\eta>0$, there exists a finite constant $R_{\eta}$ such that  
\begin{equation}\label{eq.primeup} 
\sup_{x \in [\eta, 2\pi-\eta]}|K_n'(x)| \leq R_{\eta}.
\end{equation}
Moreover, using Bernstein inequality, see e.g. Theorem 3.16 p.11 of \cite{zygmund2003}, we have 
\begin{equation}\label{eq.bernstein} 
||K_n'||_1 \leq n ||K_n||_1 = n.
\end{equation}
 Since the integral of $x \mapsto K_n'(x)$ over a period vanishes, for all $t \in K$, we can write 
\begin{equation}\label{eq.prime} 
\frac{1}{n} K_n' \ast \psi_{\rho}(t) = \frac{1}{2\pi} \int_{0}^{2\pi} \frac{K_n'(x)}{n} \left[ \psi_{\rho}(t-x) - \psi(t) \right] dx.
\end{equation}
For $\alpha>0$, let us denote by $K_{\alpha}$ the compact $\alpha-$neighboorhood of $K$ and let us fix $\alpha$ small enough so that $K_{\alpha} \subset ]0, 2\pi[$. The function $\psi_{\rho}$ is uniformly continuous on $K_{\alpha}$ and for all $\varepsilon>0$, there exists $0<\eta \leq \alpha$ such that $| \psi_{\rho}(t-x) - \psi(t) | \leq \varepsilon$ as soon as $|x|<\eta$. We can then decompose the right hand side of Equation \eqref{eq.prime} as the sum
\[
\frac{1}{n} K_n' \ast \psi_{\rho}(t) = A_n(t) + B_n(t), \]
where 
\[
A_n(t) := \frac{1}{2\pi} \int_{|x|<\eta} \frac{K_n'(x)}{n} \left[ \psi_{\rho}(t-x) - \psi(t) \right] dx
\]
and
\[
B_n(t):=\frac{1}{2\pi} \int_{\eta}^{2\pi-\eta} \frac{K_n'(x)}{n} \left[ \psi_{\rho}(t-x) - \psi(t) \right] dx.
\]
On the one hand, using the upper bound \eqref{eq.bernstein}, we have $|A_n(t)| \leq \varepsilon$ uniformly in $t \in K$. On the other hand, using this time the upper bound \eqref{eq.primeup}, we get that uniformly in $t$
\[
|B_n(t)| \leq \frac{R_{\eta}}{n} \times \left( ||\psi_{\rho}||_1 + \sup_{t \in K} |\psi_{\rho}(t)| \right),
\]
hence the result.
\end{proof}

\section{Asymptotics of the expected number of real zeros}
Thanks to the estimates for the variance and covariance established in the last section, we can now explicit the asymptotic behavior of the expected number of real roots of our random trigometric polynomial. We first consider the real zeros at a positive distance from the origin. Here $N_n(K)$ denotes the random number of real zeros of $F_n$ in a set $K$ whose volume i.e. Lebesgue measure is denoted by $\mathrm{vol}(K)$.
\begin{lem}\label{lem.bulk}
Let $K$ be a compact subset of $]0, 2\pi[$, under the hypotheses of Section 2, as $n$ goes to infinity, we have
\[
\mathbb E \left[ N_n(K) \right] = \frac{n}{\sqrt{3}\pi} \left( \mathrm{vol}(K) +o(1)\right).
\]
\end{lem}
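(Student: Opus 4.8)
The plan is to apply the Kac--Rice formula to the almost surely $C^1$ Gaussian process $F_n$ on the compact set $K$, and then substitute the variance/covariance asymptotics established in Lemmas \ref{lem.var} and \ref{lem.covar}. For fixed $t$, write $\sigma_n^2(t):=\mathbb{E}[F_n(t)^2]$, $\tau_n^2(t):=\mathbb{E}[F_n'(t)^2]$ and $c_n(t):=\mathbb{E}[F_n(t)F_n'(t)]$. By Remark \ref{r:n-d} we have $\sigma_n^2(t)\geq\gamma_\rho>0$ for all $t$, so that $F_n(t)$ has a density $p_{F_n(t)}(0)=(2\pi\sigma_n^2(t))^{-1/2}$ at the origin and the conditional law of $F_n'(t)$ given $F_n(t)=0$ is the centered Gaussian with variance $\tau_n^2(t)-c_n^2(t)/\sigma_n^2(t)$. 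The Kac--Rice formula together with the standard Gaussian identity $\mathbb{E}\big[\,|F_n'(t)|\,\big|\,F_n(t)=0\,\big]=\sqrt{2/\pi}\,\sqrt{\tau_n^2(t)-c_n^2(t)/\sigma_n^2(t)}$ then yields
\[
\mathbb{E}[N_n(K)] = \int_K \mathbb{E}\big[\,|F_n'(t)|\,\big|\,F_n(t)=0\,\big]\,p_{F_n(t)}(0)\,dt = \frac{1}{\pi}\int_K \frac{\sqrt{\sigma_n^2(t)\,\tau_n^2(t)-c_n^2(t)}}{\sigma_n^2(t)}\,dt,
\]
where $\sigma_n^2\tau_n^2-c_n^2\geq0$ by Cauchy--Schwarz, so the square root is always well defined.

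Next I would control the integrand uniformly on $K$. By Lemma \ref{lem.var}, $\sigma_n^2=K_n\ast\psi_\rho\to\psi_\rho$ uniformly on $K$, while $\tau_n^2=\tfrac{(n+1)(2n+1)}{6}L_n\ast\psi_\rho$ with $L_n\ast\psi_\rho\to\psi_\rho$ uniformly on $K$, so that $\tfrac1{n^2}\tau_n^2\to\tfrac13\psi_\rho$ uniformly on $K$ since $\tfrac{(n+1)(2n+1)}{6n^2}\to\tfrac13$. By Lemma \ref{lem.covar}, $\tfrac1n c_n\to0$ uniformly on $K$. Since $\psi_\rho$ is continuous on the compact $K$ it is bounded there, and since $\psi_\rho\geq\gamma_\rho>0$ it is bounded away from $0$ on $K$. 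Combining these,
\[
\frac{\sigma_n^2(t)\,\tau_n^2(t)-c_n^2(t)}{n^2} = \frac{(n+1)(2n+1)}{6n^2}\,(K_n\ast\psi_\rho)(t)\,(L_n\ast\psi_\rho)(t) - \Big(\frac{c_n(t)}{n}\Big)^2 \longrightarrow \frac{1}{3}\,\psi_\rho(t)^2
\]
uniformly in $t\in K$, and the limit is bounded below by $\tfrac13\gamma_\rho^2>0$. Using the uniform continuity of the square root and the uniform lower bound $\sigma_n^2\geq\gamma_\rho$, this gives
\[
\frac{1}{n}\cdot\frac{\sqrt{\sigma_n^2(t)\,\tau_n^2(t)-c_n^2(t)}}{\sigma_n^2(t)} \longrightarrow \frac{1}{\sqrt3}\,\frac{\psi_\rho(t)}{\psi_\rho(t)} = \frac{1}{\sqrt3}
\]
uniformly in $t\in K$. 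Substituting into the Kac--Rice integral and pulling the uniform $o(1)$ out of the integral over the bounded set $K$ gives $\mathbb{E}[N_n(K)]=\tfrac{n}{\pi}\int_K\big(\tfrac1{\sqrt3}+o(1)\big)\,dt=\tfrac{n}{\sqrt3\,\pi}\big(\mathrm{vol}(K)+o(1)\big)$, which is the claim.

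Given Lemmas \ref{lem.var} and \ref{lem.covar}, the argument is essentially bookkeeping, and the only points needing genuine care are the following. First, the rigorous application of the Kac--Rice formula: this requires the $C^1$ regularity of $F_n$, which is automatic for a trigonometric polynomial, and the strict positivity of $\sigma_n^2(t)$ on $K$, supplied by Remark \ref{r:n-d}, so that $p_{F_n(t)}(0)$ and the conditional law of $F_n'(t)$ are well defined (no nondegeneracy of the full vector $(F_n(t),F_n'(t))$ is needed). Second, one must track that the three convergences imported from Section 3 are \emph{uniform} on $K$, as this is precisely what lets the $o(1)$ be extracted from the integral. The main, and still mild, obstacle is to check that the correction term $-c_n^2=o(n^2)$ is negligible against $\tau_n^2$, which is of order $n^2$ with a limit bounded away from $0$ on $K$, so that it does not affect either the leading constant or the positivity needed to take the square root.
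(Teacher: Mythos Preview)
Your proof is correct and follows essentially the same approach as the paper: both apply the Kac--Rice formula to obtain $\mathbb{E}[N_n(K)]=\tfrac{1}{\pi}\int_K\sqrt{I_n(t)}\,dt$ with $I_n(t)=(\sigma_n^2\tau_n^2-c_n^2)/\sigma_n^4$, and then use the uniform convergences from Lemmas~\ref{lem.var} and~\ref{lem.covar} together with the lower bound $\sigma_n^2\geq\gamma_\rho$ to show $I_n(t)=\tfrac{(n+1)(2n+1)}{6}(1+o(1))$ uniformly on $K$. Your write-up is in fact somewhat more explicit than the paper's about the cancellation $\psi_\rho/\psi_\rho$ and the justification for passing the uniform $o(1)$ outside the integral.
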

\begin{proof}
The process $(F_n(t))_{t \geq 0}$ is a centered Gaussian process with $C^1-$paths. Besides, Remark 
\ref{r:n-d} above implies that for each $t\in[0,2\pi]$ the distribution of $F_n(t)$ is non-degenerated. 
Hence, we can use the celebrated Kac--Rice formula as in Theorem 3.2 in \cite{azaisW}  to compute the expectation of $N_n$. So let $K$ be a compact subset of $]0, 2\pi[$, 
the expected number of real zeros of $F_n$ in $K$ is then given by 
\[
\mathbb E \left[ N_n(K) \right] =\frac{1}{\pi} \int_{K} \sqrt{I_n(t)}dt,
\] 
where 
\[
I_n(t):=\frac{\mathbb E[F_n(t)^2]\mathbb E[F_n'(t)^2]-\mathbb E[F_n(t)F_n'(t)]^{2} }{\mathbb E[F_n(t)^2]^2 }.
\]
From Lemma 1, we have
\begin{equation}\label{eq.In}
I_n(t)= \frac{(n+1)(2n+1)}{6}\frac{L_n \ast \psi_{\rho}(t)}{K_n \ast \psi_{\rho}(t) } - \frac{\mathbb E[F_n(t)F_n'(t)]^2 }{(K_n \ast \psi_{\rho}(t))^2},
\end{equation}
and combining Lemma \ref{lem.var} and \ref{lem.covar}, we get that uniformly on $K$, as $n$ goes to infinity
\[
I_n(t)= \frac{(n+1)(2n+1)}{6}( 1 +o(1)), 
\] 
so that we have indeed
\[ 
\mathbb E \left[ N_n(K) \right]=n \, \frac{\text{vol}(K) }{\sqrt{3}\pi} \left( 1 +o(1)\right).
\]
\end{proof}
The expected number of real zeros in the neighborhood of the origin is handled thanks to the following Lemma.
\begin{lem}\label{lem.bord}
Under the hypotheses of Section 2, there exists a finite constant $C$ such that, for $\varepsilon>0$ small enough and for all $n \geq 1$
\[
\frac{\mathbb E \left[ N_n([0,\varepsilon]) \right]}{n} \leq C \sqrt{\varepsilon}, \qquad \frac{\mathbb E \left[ N_n([2\pi-\varepsilon,2\pi]) \right]}{n} \leq C \sqrt{\varepsilon}.
\]
\end{lem}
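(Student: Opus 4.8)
The plan is to bound $\mathbb E[N_n([0,\varepsilon])]$ by again invoking the Kac--Rice formula on the interval $[0,\varepsilon]$, but this time, since we cannot hope for a uniform control of $I_n(t)$ all the way down to the origin (where the kernel $\psi_\rho$ may blow up and the covariance estimate of Lemma \ref{lem.covar} degenerates), we settle for a crude pointwise upper bound on the integrand. Concretely, $\mathbb E[N_n([0,\varepsilon])] = \frac{1}{\pi}\int_0^\varepsilon \sqrt{I_n(t)}\,dt$ with $I_n(t) \le \mathbb E[F_n'(t)^2]/\mathbb E[F_n(t)^2]$, so using the lower bound $\mathbb E[F_n(t)^2] \ge \gamma_\rho$ from Remark \ref{r:n-d} together with $\mathbb E[F_n'(t)^2] = \frac{(n+1)(2n+1)}{6} L_n\ast\psi_\rho(t)$, it suffices to control $\int_0^\varepsilon \sqrt{L_n\ast\psi_\rho(t)}\,dt$ and show it is $O(\sqrt\varepsilon)$ uniformly in $n$, after which we absorb the factor $\sqrt{(n+1)(2n+1)/6} = O(n)$ into the constant $C$.

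The first step is therefore to establish that $\int_0^{2\pi} \sqrt{L_n\ast\psi_\rho(t)}\,dt$ is bounded by a constant independent of $n$; combined with Cauchy--Schwarz this would in fact only give $\int_0^\varepsilon \sqrt{L_n\ast\psi_\rho} \le \sqrt{\varepsilon}\,\big(\int_0^\varepsilon L_n\ast\psi_\rho\big)^{1/2} \le \sqrt{\varepsilon}\,\|L_n\ast\psi_\rho\|_1^{1/2}$, and since $\|L_n\ast\psi_\rho\|_1 \le \|L_n\|_1\|\psi_\rho\|_1 = \|\psi_\rho\|_1$ by Young's inequality and Lemma \ref{lem.var}, this is exactly the $\sqrt\varepsilon$ bound we need. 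Thus the key inequality is simply
\[
\int_0^\varepsilon \sqrt{L_n\ast\psi_\rho(t)}\,dt \;\le\; \sqrt{\varepsilon}\;\Big(\int_0^\varepsilon L_n\ast\psi_\rho(t)\,dt\Big)^{1/2} \;\le\; \sqrt{\varepsilon}\;\|\psi_\rho\|_1^{1/2},
\]
which plugged back in yields $\mathbb E[N_n([0,\varepsilon])] \le \frac{1}{\pi}\sqrt{(n+1)(2n+1)/(6\gamma_\rho)}\,\sqrt{\varepsilon}\,\|\psi_\rho\|_1^{1/2} = C n\sqrt\varepsilon$ for a suitable finite $C$, and the estimate near $2\pi$ follows identically by symmetry (or by the substitution $t \mapsto 2\pi - t$, under which $F_n$ transforms into a polynomial of the same type).

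The main obstacle — really the only subtlety — is making sure the Kac--Rice formula is legitimately applicable on a neighborhood of the origin, i.e. that $F_n(t)$ remains non-degenerate there: this is precisely guaranteed by \eqref{eq.lbound} in Remark \ref{r:n-d}, which holds for \emph{all} $t \in [0,2\pi]$ including $t=0$, so there is no issue. One should also note that one cannot replace the bound $I_n(t) \le \mathbb E[F_n'(t)^2]/\mathbb E[F_n(t)^2]$ by the sharper asymptotic of Lemma \ref{lem.bulk}, because the covariance control of Lemma \ref{lem.covar} is only valid on compact subsets of $]0,2\pi[$ and fails as $t \to 0$; dropping the (nonnegative) covariance term entirely is what makes the argument robust near the boundary. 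Finally, $\varepsilon$ must be taken small enough that $[0,\varepsilon]$ and $[2\pi-\varepsilon,2\pi]$ are disjoint, which is the only reason for the qualifier ``for $\varepsilon > 0$ small enough'' in the statement.
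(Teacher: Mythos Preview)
Your proof is correct and follows essentially the same route as the paper: apply Kac--Rice, drop the nonnegative covariance term to get $I_n(t)\le \frac{(n+1)(2n+1)}{6\gamma_\rho}\,L_n\ast\psi_\rho(t)$ via the uniform lower bound \eqref{eq.lbound}, then use Cauchy--Schwarz on $[0,\varepsilon]$ together with $\|L_n\ast\psi_\rho\|_1\le\|L_n\|_1\|\psi_\rho\|_1=\|\psi_\rho\|_1$ to extract the factor $\sqrt{\varepsilon}$. The only cosmetic difference is that the paper applies Cauchy--Schwarz directly to $\int_0^\varepsilon\sqrt{I_n(t)}\,dt$ before bounding $I_n$, whereas you bound $I_n$ first and then apply Cauchy--Schwarz to $\int_0^\varepsilon\sqrt{L_n\ast\psi_\rho}$; the ingredients and the resulting constant are the same up to the normalization factor $2\pi$ in the $\mathbb L^1$-norm.
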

\begin{proof}
Again, thanks to Kac--Rice formula, we have
\[
\mathbb E \left[ N_n([0,\varepsilon]) \right] = \frac{1}{\pi} \int_{0}^{\varepsilon} \sqrt{I_n(t)}dt.
\]
Using Cauchy--Schwarz inequality, we have then 
\begin{equation}\label{eq.CS}
\mathbb E \left[ N_n([0,\varepsilon]) \right]^2 \leq  \frac{1}{\pi^2} \left(  \varepsilon \int_{0}^{\varepsilon} |I_n(t)|dt\right).
\end{equation}
Starting from the expression \eqref{eq.In} of $I_n(t)$, using the lower bound \eqref{eq.lbound} on the variance, we have for all $t \in [0, \varepsilon]$
\[
 |I_n(t)| \leq \frac{(n+1)(2n+1)}{6 \gamma_{\rho}} L_n \ast \psi_{\rho}(t) ,
 \]
 so that 
\[
\begin{array}{ll}
\displaystyle{ \int_{0}^{\varepsilon} |I_n(t)|dt } & \displaystyle{\leq \frac{(n+1)(2n+1)}{6 \gamma_{\rho}} \int_0^{2\pi} L_n \ast \psi_{\rho}(t)  dt} \\
 \\
 &  \leq  \displaystyle{\frac{2\pi(n+1)(2n+1)}{6 \gamma_{\rho}} || L_n \ast \psi_{\rho} ||_1 }\\
 \\
  &  \leq  \displaystyle{\frac{\pi(n+1)(2n+1)}{3 \gamma_{\rho}} || L_n ||_1 \times || \psi_{\rho} ||_1. }
 \end{array}
 \]
Remembering that $|| L_n ||_1=1$ and injecting this last estimate in Equation \eqref{eq.CS}, we get that for all $n\geq 1$
 \[
\frac{ \mathbb E \left[ N_n([0,\varepsilon]) \right]^2}{n^2} \leq  \frac{(n+1)(2n+1)}{3\pi  \gamma_{\rho}n^2} \varepsilon  \leq C^2 \varepsilon, \;\; \text{where} \;\; C:=\sqrt{\frac{2 || \psi_{\rho} ||_1}{\pi \gamma_{\rho}}}.
 \]
 The proof of the analogue estimate on $[2\pi-\varepsilon,2\pi]$ is similar.
\end{proof}

We can finally combine Lemma \ref{lem.bulk} and Lemma \ref{lem.bord} to deduce the asymptotic behavior of the expected number of real roots on the whole interval $[0, 2\pi]$.  
\begin{thm}Under the hypotheses of Section 2, as $n$ goes to infinity, we have
\[
\mathbb E \left[ N_n([0,2\pi]) \right] =  \frac{2n}{\sqrt{3}} \left( 1 +o(1)\right).
\]
\end{thm}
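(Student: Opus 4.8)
The plan is to combine the two localization lemmas that precede the theorem in a straightforward additivity argument. Since $N_n([0,2\pi])$ counts zeros on the whole period, I would split the interval as $[0,2\pi]=[0,\varepsilon]\cup[\varepsilon,2\pi-\varepsilon]\cup[2\pi-\varepsilon,2\pi]$ for a small parameter $\varepsilon>0$ to be sent to zero after $n\to+\infty$. By additivity of the counting measure (up to the finitely many endpoints, which are almost surely not zeros and in any case contribute nothing to the expectation), we have
\[
\mathbb E\left[N_n([0,2\pi])\right] = \mathbb E\left[N_n([\varepsilon,2\pi-\varepsilon])\right] + \mathbb E\left[N_n([0,\varepsilon])\right] + \mathbb E\left[N_n([2\pi-\varepsilon,2\pi])\right].
\]

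The middle term is controlled by Lemma \ref{lem.bulk} applied to the compact set $K=[\varepsilon,2\pi-\varepsilon]$, whose volume is $2\pi-2\varepsilon$, giving
\[
\mathbb E\left[N_n([\varepsilon,2\pi-\varepsilon])\right] = \frac{n}{\sqrt{3}\pi}\left(2\pi - 2\varepsilon + o(1)\right) = \frac{2n}{\sqrt{3}}\left(1 - \frac{\varepsilon}{\pi}\right) + o(n),
\]
where the $o(n)$ depends on $\varepsilon$. The two boundary terms are each at most $Cn\sqrt{\varepsilon}$ by Lemma \ref{lem.bord}. Dividing through by $n$, taking $\limsup$ and $\liminf$ as $n\to+\infty$ with $\varepsilon$ fixed, one gets
\[
\frac{2}{\sqrt{3}}\left(1-\frac{\varepsilon}{\pi}\right) - 2C\sqrt{\varepsilon} \le \liminf_{n}\frac{\mathbb E[N_n([0,2\pi])]}{n} \le \limsup_{n}\frac{\mathbb E[N_n([0,2\pi])]}{n} \le \frac{2}{\sqrt{3}}\left(1-\frac{\varepsilon}{\pi}\right) + 2C\sqrt{\varepsilon}.
\]
Letting $\varepsilon\to 0^+$ squeezes both bounds to $2/\sqrt{3}$, which is exactly the claim $\mathbb E[N_n([0,2\pi])] = \tfrac{2n}{\sqrt3}(1+o(1))$.

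There is essentially no serious obstacle here; the theorem is a clean $\varepsilon$-then-$n$ (or rather $n$-then-$\varepsilon$) limit exchange, and all the analytic work has already been done in Lemmas \ref{lem.var}, \ref{lem.covar}, \ref{lem.bulk} and \ref{lem.bord}. The only point requiring a modicum of care is the order of limits: one must fix $\varepsilon$, let $n\to+\infty$ first (so that the $o(1)$ in Lemma \ref{lem.bulk} vanishes), and only afterwards let $\varepsilon\to 0$; this is legitimate precisely because the bound $C\sqrt{\varepsilon}$ in Lemma \ref{lem.bord} is \emph{uniform in $n$}. One should also note in passing that the contribution of the finitely many endpoints $\{0,\varepsilon,2\pi-\varepsilon,2\pi\}$ to the expected count is zero, since for each fixed $t$ the random variable $F_n(t)$ is non-degenerate Gaussian by Remark \ref{r:n-d}, hence $\mathbb P(F_n(t)=0)=0$.
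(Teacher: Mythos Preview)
Your proposal is correct and is precisely the argument the paper has in mind: the authors simply state that the theorem follows by combining Lemma~\ref{lem.bulk} and Lemma~\ref{lem.bord}, without spelling out the $\varepsilon$-splitting and the $n\to\infty$ then $\varepsilon\to0$ squeeze that you carefully wrote down. If anything, your lower bound can be slightly tightened by noting that the boundary contributions are non-negative, so the $-2C\sqrt{\varepsilon}$ term is unnecessary on the $\liminf$ side; but this is cosmetic and does not affect the conclusion.
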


\newcommand{\etalchar}[1]{$^{#1}$}

\end{document}